\documentclass[11pt]{amsart}

\usepackage[margin=1.15in]{geometry}
\usepackage{amsmath,amssymb,amsthm}
\usepackage{hyperref}
\usepackage{tikz}

\hypersetup{
  colorlinks=true,
  linkcolor=purple,
  citecolor=teal,
}

\newtheorem{theorem}{Theorem}[section]
\newtheorem{corollary}[theorem]{Corollary}
\newtheorem{proposition}[theorem]{Proposition}

\theoremstyle{definition}
\newtheorem{definition}[theorem]{Definition}
\theoremstyle{remark}

\newcommand{\CFK}{CFK^{\infty}}
\newcommand{\CFhat}{\widehat{CF}}
\newcommand{\HFKhat}{\widehat{HFK}}
\newcommand{\HFhat}{\widehat{HF}}
\newcommand{\cF}{\mathcal F}

\def\DrawContactLevel{1}
\usetikzlibrary{arrows.meta,patterns,decorations.pathreplacing}
\definecolor{ink}{HTML}{262626}
\definecolor{rhocolor}{HTML}{35688D}
\definecolor{rhofill}{HTML}{E8F0F5}
\definecolor{levelcolor}{HTML}{24745B}
\newif\ifcontactlevel
\ifdefined\DrawContactLevel\contactleveltrue\else\contactlevelfalse\fi

\title[ The $b$-invariant]{ On the Upsilon invariant and the Baldwin--Vela-Vick $b$-invariant of fibered knots}

\author{Linh Truong}
\address{Department of Mathematics, University of Michigan, Ann Arbor, MI 48109, U.S.A.}
\email{tlinh@umich.edu}

\begin{document}

\begin{abstract}

For a null-homologous fibered knot $K$ in a rational homology sphere, Baldwin and Vela-Vick define a refinement $b(K)$ of the Ozsv\'ath-Szab\'o contact invariant. We prove that if the Ozsv\'ath-Stipsicz-Szab\'o  concordance invariant Upsilon has  slope $\Upsilon'_K(t)=-g(K)$ for some $t\in[0,\frac{2}{n+1})$, where $g(K)$ is the genus of $K$, then $b(K)>n$. This extends a result of He, Hubbard, and the author on the Upsilon invariant and right-veering monodromies. We also prove that the knot Floer homology of a fibered knot in the $g(K)-b(K)$ Alexander grading is nonvanishing. As an application, we give many families of fibered cable knots whose $b(K)$ invariant grows arbitrarily large. Finally, we prove that if a monodromy of a fibered knot is neither right-veering nor left-veering, then the third-to-top term of knot Floer homology is nonvanishing. 
\end{abstract}

\maketitle

\section{Introduction}

Given a fibered knot $K$ in a closed 3-manifold  $Y$, its associated open book decomposition determines a contact structure under the Giroux correspondence. This contact structure $\xi_K$ has an associated  contact class in the Heegaard Floer homology group of $-Y$
\[c(\xi_K) = [c] \in \HFhat(-Y),\]
a contact invariant of the contact manifold $(Y, \xi_K)$ defined by Ozsv\'ath and Szab\'o \cite{OS-contact}. A refinement of the contact invariant $b(K)$ was introduced by Baldwin and Vela-Vick \cite{BVV}, obtained from the filtration of the Heegaard Floer complex of $-Y$ induced from the fibered knot $K$.  The invariant $b(K)$ takes values in $\mathbb{Z}_+\cup \{\infty \}$. If the contact invariant $c(\xi_K)$ vanishes, then $b(K)$ measures the minimum  filtration level at which the contact class vanishes. If the contact invariant $c(\xi_K) \neq 0$, then $b(K) = \infty$. Using a Heegaard diagram adapted to the fibered knot $K \subset Y$, Baldwin and Vela-Vick proved that $b(K) > 1 $ implies the monodromy associated to $K$ is right-veering. Recently, Baldwin, Ni, and Sivek proved the converse \cite{BNS}, implying the knot Floer homology detects right-veeringness, by leveraging relationships between knot Floer homology and symplectic Floer homology.  

The Upsilon invariant defined by Ozsv\'ath, Stipsicz, and Szab\'o \cite{OSS} for knots in $S^3$  is a piecewise linear smooth concordance invariant obtained from the filtered knot Floer complex $\CFK(S^3,K)$. Its generalization to
null-homologous knots in rational homology spheres was introduced by Alfieri, Celoria, and Stipsicz \cite{ACS}. 
He, Hubbard, and the author \cite{HHT} proved that for a fibered knot, if the Upsilon invariant has a slope $-g(K)$ at some $t \in [0,1)$, where $g(K)$ is the genus of the knot, then the monodromy of the associated open book is right-veering. Since the right-veering  condition is equivalent to the inequality $b(K)>1$, it is natural to ask whether we can extract more information on the $b$-invariant depending on Upsilon. Indeed, by extending the techniques used in \cite{HHT}, we will show that the location $t$ at which Upsilon attains slope $-g(K)$ will provide a lower bound on $b(K)$.

\begin{theorem}\label{thm:main}
Let $K\subset Y$ be a null-homologous fibered knot of genus $g(K)>0$ in a rational homology sphere $Y$, and let $n \geq 1$ be an integer.  
If 
$\Upsilon'_{Y,K, \mathfrak{s}}(t)=-g(K)$ for some non-singular point $t\in[0,\frac{2}{n+1})$ and some spin$^c$ structure $\mathfrak{s} $ of $Y$, then $b(K)>n$. 

\end{theorem}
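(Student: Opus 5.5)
We argue by contraposition: assume $b(K)\le n$ and show that $\Upsilon'_{Y,K,\mathfrak s}(t)\neq -g(K)$ at every non-singular $t\in[0,\frac{2}{n+1})$, for every $\mathfrak s$. Two standard facts frame the argument. First, for a fibered knot of genus $g=g(K)$ the top group $\HFKhat(Y,K,g)$ has rank one, and under the reversal $(Y,K)\mapsto(-Y,K)$ it corresponds to the bottom group $\HFKhat(-Y,K,-g)\cong\mathbb F$. The Ozsv\'ath--Szab\'o class $c$ is the image of the generator of this bottom group in $\CFhat(-Y)$. Second, by the Baldwin--Vela-Vick definition, $b(K)\le n$ means that $[c]$ already dies in the homology of the filtration level $\cF_{-g+n}$. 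That is, there is a chain $x$ supported in Alexander filtration at most $-g+n$ with $\partial x = c$.

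Following \cite{HHT}, we translate this into a statement about $\CFK(Y,K)$ with its two filtrations $(i,j)$, using duality $\CFK(-Y,K)\simeq \CFK(Y,K)^*$. Dually, the top generator $\xi$ in $(i,j)=(0,g)$ of $\CFK(Y,K)$ must be hit by the boundary from a generator in filtration level $(0,j)$ with $j\ge g-n$. Equivalently, there is a chain $\eta$ with a nonzero component $\partial\eta\ni\xi$, where the relevant part of $\eta$ lies in a column-type region $\{i=0,\ g-n\le j\le g\}$ of the $(i,j)$-plane. Via the $U$-action we then pick up a differential between the staircase corner at $(0,g)$ and a lattice point whose Alexander drop is at most $n$. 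For $n=1$ this should recover exactly the configuration used in \cite{HHT}.

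Next comes the Upsilon computation. We use the Livingston-type description in which $\Upsilon_{K}(t)=-2\min\{\tfrac t2 j+(1-\tfrac t2)i\}$, with the minimum taken over cycles generating the homology in the correct $\mathfrak s$ and Maslov grading. The slope $\Upsilon'(t)=-g$ at a non-singular $t$ forces the minimizing generator to have $j-i=g$, so it must involve the extremal generator $\xi$ at $(0,g)$ (after $U$-translation). We will then show that the chain $\eta$ above lets us modify any cycle containing $\xi$. Replacing $\xi$ by $\xi+\partial\eta$ produces a homologous cycle supported at points $(i',j')$ with $j'\ge g-n$ shifted by at most one $U$-power, i.e.\ $i'\le -1$ contributions, or points $(0,j')$ with $j'\le g-1$. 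We must then check the linear inequality
\[
\tfrac t2 j'+(1-\tfrac t2)i' < \tfrac t2 g \quad\text{whenever } t<\tfrac{2}{n+1},
\]
for the worst-case point, namely $(i',j')=(-1,g-n)\cdot$(appropriate shift), i.e.\ $\tfrac t2(g-n)-(1-\tfrac t2)<\tfrac t2 g$. This fails to be strict exactly at $t=\tfrac{2}{n+1}$-type thresholds once we account for the grading normalization, and this is where the bound $\frac{2}{n+1}$ should emerge. Granting it, the minimizer never needs $\xi$, so the slope cannot equal $-g$, which is the contradiction.

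The main obstacle is the second step: extracting from the filtered vanishing of $c$ in $\cF_{-g+n}$ of $\CFhat(-Y)$ a precise differential in $\CFK(Y,K)$ with controlled $(i,j)$ support. This requires a careful dualization, keeping track of the fact that $\CFhat$ sits as the $i=0$ column, and ensuring that the $U$-equivariant reduced model can be chosen so that $\xi$ is hit from a point whose coordinates yield exactly the threshold $\frac{2}{n+1}$. I would first try to work with a reduced, horizontally and vertically simplified basis near the corner, as in \cite{HHT}, and adapt their $n=1$ argument level by level.
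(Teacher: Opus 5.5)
Your proposal has a genuine gap at exactly the step you flag as the main obstacle. As set up, it cannot be closed, because your dualization runs the arrow the wrong way.

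On the $-Y$ side, $b(K)\le n$ says the contact generator $\mathbf c$ (Alexander grading $-g$) is \emph{hit} by the vertical differential of a chain $\mathbf y$ with $A(\mathbf y)\le n-g$. Under duality this becomes a statement about arrows \emph{leaving} the top generator $\xi$. The map $H_*(\{A\ge g-n\})\to H_*(\{A\ge g\})\cong\mathbb F$ of quotient complexes of $\CFhat(Y)$ is zero. Equivalently, no cycle of the quotient $\{A\ge g-n\}$ has nonzero $\xi$-coefficient. Since $\partial$ never raises $j$, nothing in $\{i=0,\ g-n\le j<g\}$ can hit $\xi$ at $(0,g)$, and in a reduced model nothing at $(0,g)$ does either. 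So the chain $\eta$ with $\xi\in\partial\eta$, and the move ``replace $\xi$ by $\xi+\partial\eta$'', do not exist.

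Your threshold check shows the same problem. The inequality $\frac t2(g-n)-(1-\frac t2)<\frac t2 g$ is equivalent to $\frac t2(1-n)<1$, which holds for every $t\in[0,2]$, so $\frac{2}{n+1}$ cannot come out of it. A point at $(-1,g-n)$ lies below and to the left of $(0,g)$ and is always cheaper, so ``granting it'' hides that the mechanism is wrong.

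The paper does not dualize the complex. It uses $\Upsilon_{Y,K,\mathfrak s}=-\Upsilon_{-Y,-K,\overline{\mathfrak s}}$, so slope $g$ for $-K\subset -Y$ must be realized by a generator in Alexander grading $-g$. Two ingredients missing from your plan are then needed:
\begin{itemize}
\item[(i)] A reduced model in which $\mathbf c$ is the \emph{unique} generator in Alexander grading $-g$ (Proposition~\ref{prop:contactgenerator}). This comes from a Hom-type change of basis fixing $\mathbf c$, together with $H_*(\cF_{-g})\cong\mathbb F$.
\item[(ii)] The decomposition $\partial^\infty\mathbf y=\mathbf c+\boldsymbol\rho$ with $\boldsymbol\rho$ supported in $\{i\le -1,\ j\le n-g\}$. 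This gives $F_t(U^{-m}\boldsymbol\rho)-f_t(U^{-m}\mathbf c)\le \frac t2(n+1)-1<0$ for $t<\frac{2}{n+1}$.
\end{itemize}
Then any minimizing cycle $\boldsymbol\eta$ can be replaced by the homologous cycle $\boldsymbol\eta+\partial^\infty U^{-m}\mathbf y$. This cycle still minimizes $F_t$ but has no component in Alexander grading $-g$, which contradicts the slope.

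If you want to stay on the $Y$ side, the correct dual argument runs the other way:
\begin{itemize}
\item[(a)] Translate a minimizing cycle by a power of $U$ so that its $F_t$-maximum is attained at $[\xi,0,g]$.
\item[(b)] Its part in column $i=0$ with $j\ge g-n$ would be a forbidden cycle of the quotient, unless some component with $i\ge 1$, $j\ge g-n$ cancels its boundary there.
\item[(c)] Any such component has $F_t\ge f_t(\xi)+1-\frac t2(n+1)>f_t(\xi)$, which contradicts (a).
\end{itemize}
So the relevant point is $(1,g-n)$, not $(-1,g-n)$, and the competitor must be \emph{more} expensive rather than cheaper. You would also need the reduced model in which $\xi$ is the unique generator in Alexander grading $g$.
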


Note that for $n=1$ this theorem recovers the main result of \cite{HHT}. Since the right-hand slope of Upsilon at $t=0$ recovers (the negative of) the $\tau$ invariant,  we additionally recover Hedden's result \cite{Hedden} that  fibered knots in $S^3$ with $\tau(K) = g(K)$ induce the (unique tight) contact structure on $S^3$ with nonvanishing Heegaard Floer contact invariant.

Since Upsilon is a concordance invariant, we obtain a concordance obstruction among fibered knots.

\begin{corollary}\label{cor:concordance}
Let $K,J\subset S^3$ be fibered knots with equal genus $g(K)=g(J)=g$,
and suppose that $K$ and $J$ are smoothly concordant. 
If
\[
\Upsilon'_K(t_0)=-g,
\]
for some $t_0 \in [0,\frac{2}{n+1})$
then
\[ b(J)\geq n. \]
\end{corollary}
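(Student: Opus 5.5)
The plan is to transfer the slope condition from $K$ to $J$ using concordance invariance of Upsilon, and then apply Theorem~\ref{thm:main} to $J$. Since $K$ and $J$ are smoothly concordant knots in $S^3$, Ozsv\'ath--Stipsicz--Szab\'o \cite{OSS} give $\Upsilon_K(t)=\Upsilon_J(t)$ for all $t\in[0,2]$. The two functions coincide, so their one-sided derivatives agree everywhere. Hence the hypothesis $\Upsilon'_K(t_0)=-g$ gives $\Upsilon'_J(t_0)=-g=-g(J)$, where we use $g(J)=g$.

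The only subtlety is that Theorem~\ref{thm:main} requires a \emph{non-singular} point, whereas $t_0$ might be a breakpoint of the piecewise linear function $\Upsilon_J$. Since $\Upsilon_J$ has only finitely many breakpoints, I interpret $\Upsilon'_K(t_0)=-g$ as a statement about a one-sided slope at $t_0$, and I move to a nearby non-singular point. There are two cases.
\begin{itemize}
\item If the right-hand slope at $t_0$ equals $-g$, then $\Upsilon_J$ is linear with slope $-g$ on some interval $(t_0,t_0+\varepsilon)$. Since $[0,\frac{2}{n+1})$ is open on the right, I can choose a non-singular $t$ in this interval that still lies in $[0,\frac{2}{n+1})$.
\item If instead the left-hand slope at $t_0>0$ equals $-g$, I pick a non-singular $t\in(t_0-\varepsilon,t_0)$. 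This $t$ lies in $(0,t_0)\subset[0,\frac{2}{n+1})$.
\end{itemize}

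In either case $J$ is a null-homologous fibered knot of genus $g>0$ in $S^3$. Here $S^3$ is a rational homology sphere with a unique spin$^c$ structure. Moreover, $\Upsilon'_J(t)=-g(J)$ at a non-singular point $t\in[0,\frac{2}{n+1})$. Theorem~\ref{thm:main} therefore gives $b(J)>n$, which in particular yields $b(J)\geq n$.

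If one is worried that the intended reading of $\Upsilon'_K(t_0)$ at a singular point is weaker, there is a fallback. We only need a non-singular point with slope $-g$ inside the larger interval $[0,\frac2n)$, and then apply Theorem~\ref{thm:main} with $n-1$ in place of $n$ to get $b(J)>n-1$, i.e.\ $b(J)\ge n$. This requires $n\geq 2$; for $n=1$ the conclusion $b(J)\geq 1$ is automatic because $b$ takes values in $\mathbb{Z}_+\cup\{\infty\}$. The genus-$0$ case is excluded since the theorem assumes $g(K)>0$, and the unknot has $b=\infty$ anyway.

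The main obstacle, modest here, is justifying that a one-sided slope of $-g$ at $t_0$ persists on an open interval of non-singular points within the allowed range. This follows from piecewise linearity of $\Upsilon$ with finitely many breakpoints.
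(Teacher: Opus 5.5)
Your proof is correct and follows the paper's argument: concordance invariance gives $\Upsilon_J=\Upsilon_K$, so $\Upsilon'_J(t_0)=-g(J)$. Theorem~\ref{thm:main} then gives $b(J)>n$, and the genus-zero case is the unknot with $b=\infty$. Your step of moving to a nearby non-singular point is a detail the paper skips, and it makes your fallback paragraph unnecessary; that paragraph does not say how the non-singular point in $[0,\frac{2}{n})$ would be found.
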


\begin{proof}
Since $\Upsilon$ is a smooth concordance invariant,
$\Upsilon_J=\Upsilon_K$.
Thus
\[
\Upsilon'_J(t_0)
=
\Upsilon'_K(t_0)
=
-g
=
-g(J).
\]
The result follows from Theorem~\ref{thm:main} if $g>0$. If $g=0$, then $K$ and $J$ are unknots and $b(K)=b(J) = \infty$.
\end{proof}

This theorem and its predecessor \cite{HHT} suggest that knots that achieve minimal Upsilon slope (i.e. negative genus) are an interesting class of knots. See \cite[Corollary 1.7]{HHT}.
We also have immediate consequences for their concordance genus. 
Let $K$ be a knot with $\Upsilon_K'(t) = -g(K)$ for some $t \in (0,1)$. Consider the concordance genus $g_c(K)$, which is the minimal
Seifert genus of any knot concordant to $K$, and the stable concordance genus  $ \underrightarrow{g_c}(K) = \lim_{n \to \infty} \frac{g_c(nK)}{n}$. Then 
\[g(K) = g_c(K
) = \underrightarrow{g_c}(K).\] This follows from \cite[Theorem 1.13]{OSS}. That is, if $J$ is concordant to $K$, then $J$ has the same Upsilon invariant as $K$. However, the slopes of Upsilon of $J$ are bounded below by $-g(J)$, and thus $g(J) \geq g(K)$. Thus, $K$ is genus-minimizing in its concordance class, or in other words, $g_c(K) = g(K)$. Moreover, because Upsilon is additive under connected sums, for this class of knots we have that the concordance genus of a connected sum of $K$ is additive. As a consequence, 
$ \underrightarrow{g_c} (K) =g(K)$. Concordance genus is studied in \cite{Livingston-concordancegenus, Hom-concordancegenus, OSS}. Stable concordance genus is studied in \cite{Kearney}.

In another direction, Baldwin-Vela-Vick show that the knot Floer homology of a fibered knot is nontrivial in its next-to-top Alexander grading. We prove an analogous non-vanishing statement: the term of knot Floer homology of a fibered knot in Alexander grading $g(K)-b(K)$ is nonzero. 

\begin{proposition} \label{prop:nonzero}
    Suppose $K \subset Y $ is a fibered knot of genus $g > 0$ with fiber surface $\Sigma$ and $b(K) = n < \infty$. Then the summand $\HFKhat(Y, K, [\Sigma], g-n)$ is nonzero. 
\end{proposition}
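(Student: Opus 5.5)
Recall the Baldwin--Vela-Vick setup. Mirror conventions aside, the fibered knot $K$ induces a filtration
\[
\cF_{-g}\subseteq \cF_{-g+1}\subseteq \cdots \subseteq \cF_{g} = \CFhat(-Y),
\]
where $\cF_j$ is spanned by generators of Alexander grading at most $j$, with the grading normalized so the bottom level is $-g$. The bottom level $\cF_{-g}$ has homology $\HFKhat(-Y,K,-g)\cong\mathbb{F}$, generated by the contact class $c$. The invariant $b(K)$ is the smallest $k$ such that $[c]=0$ in $H_*(\cF_{-g+k})$. By the symmetry and duality $\HFKhat(-Y,K,j)\cong \HFKhat(Y,K,-j)\cong\HFKhat(Y,K,j)$, it suffices to show
\[
H_*\bigl(\cF_{-g+n}/\cF_{-g+n-1}\bigr)=\HFKhat(-Y,K,-g+n)\neq 0 .
\]

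\textbf{Plan.} Assume $b(K)=n$. Then $[c]\neq 0$ in $H_*(\cF_{-g+n-1})$, while $[c]=0$ in $H_*(\cF_{-g+n})$. Consider the short exact sequence
\[
0\to \cF_{-g+n-1}\to \cF_{-g+n}\to \cF_{-g+n}/\cF_{-g+n-1}\to 0 .
\]
The inclusion-induced map $i_*\colon H_*(\cF_{-g+n-1})\to H_*(\cF_{-g+n})$ sends the nonzero class $[c]$ to zero, so $i_*$ is not injective. By exactness of the associated long exact sequence, the kernel of $i_*$ equals the image of the connecting map
\[
\partial\colon H_{*+1}\bigl(\cF_{-g+n}/\cF_{-g+n-1}\bigr)\to H_*(\cF_{-g+n-1}).
\]
This image contains $[c]\neq 0$, so the source of $\partial$ is nonzero. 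The associated graded quotient $\cF_{-g+n}/\cF_{-g+n-1}$ is the knot Floer complex in Alexander grading $-g+n$, so its homology is $\HFKhat(-Y,K,-g+n)$, and this group is therefore nonzero.

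It remains to translate back to $\HFKhat(Y,K,[\Sigma],g-n)$. Knot Floer homology of the mirror, equivalently of $(-Y,K)$, satisfies $\HFKhat(-Y,K,j)\cong\HFKhat(Y,K,-j)$ up to duality, which over $\mathbb{F}$ preserves dimension. Conjugation symmetry gives $\HFKhat(Y,K,-j)\cong\HFKhat(Y,K,j)$. Taking $j=n-g$ yields
\[
\HFKhat(Y,K,[\Sigma],g-n)\neq 0 .
\]
The case $n=1$ recovers the Baldwin--Vela-Vick next-to-top nonvanishing, which serves as a consistency check.

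\textbf{Main obstacle.} The step needing care is the bookkeeping of conventions: one must confirm that, in Baldwin--Vela-Vick's definition, $b(K)$ is indexed so that vanishing first occurs in $\cF_{-g+b}$ and not in $\cF_{-g+b-1}$. One must also fix the Alexander grading relative to $[\Sigma]$ in a rational homology sphere, where the relative class $[\Sigma]$ pins down the grading and symmetry holds in the appropriate spin$^c$ sense. I would first check both points against the $n=1$ case and the original definition, then run the long exact sequence argument above.
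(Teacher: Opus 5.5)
Your proposal is correct and is essentially the paper's argument. The paper works at the chain level: it takes $\mathbf{d}\in\mathcal{F}_{n-g}$ with $\partial\mathbf{d}=\mathbf{c}$ and checks directly that $\mathbf{d}$ gives a nonzero class in $H_*(\mathcal{F}_{n-g}/\mathcal{F}_{n-g-1})$ (otherwise $\mathbf{c}$ would bound in $\mathcal{F}_{n-g-1}$), which is exactly the diagram chase showing your connecting map hits $[\mathbf{c}]$, and it then passes to $\HFKhat(Y,K,[\Sigma],g-n)$ by the same symmetry you use (cited as Remark 2.5 of Baldwin--Vela-Vick).
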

We remark that an alternative proof of Proposition~\ref{prop:nonzero} follows from \cite[Remark 1.4]{BNS}. 
For $n = 1$, this statement is proven in \cite[Theorem 1.1]{BVV}; our proof is analogous to their arguments.  In the next proposition,  we give many families of cable knots which have arbitrarily large $b$-invariant. Instead of computing Upsilon, the proof rather relies on our nonvanishing statement  Proposition~\ref{prop:nonzero}, combined with work of Hedden \cite{Hedden-cabling} on the knot Floer homology of cable knots.

\begin{proposition} \label{prop:cableexamples}
Let $n \geq 0$ and $p \geq 2$ be  integers. 
Let $K$ be a nontrivial fibered knot in $S^3$ with $\tau(K) < g(K)$. There exists an integer $N_p$ such that for $n > N_p$,  
the cable knots $K_{p, pn+1}$  have  $b$-invariant bounds
\[ p \leq b(K_{p, pn+1}) < \infty\]
\end{proposition}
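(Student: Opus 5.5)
Two inequalities need proving: the upper bound $b(K_{p,pn+1})<\infty$ and the lower bound $b(K_{p,pn+1})\ge p$.

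\textbf{Upper bound.} For $p\ge 2$ and $n\ge 1$ the cable $K_{p,pn+1}$ of a fibered knot is fibered, with genus
\[ g(K_{p,q}) = p\,g(K)+\tfrac{(p-1)(q-1)}{2}, \qquad q=pn+1. \]
We show its open book supports a contact structure with vanishing contact invariant, so $b<\infty$. For a fibered knot in $S^3$, $c(\xi)\neq 0$ holds if and only if $\tau=g$ (Hedden). Hedden's cabling results \cite{Hedden-cabling} give, for $n$ large,
\[ \tau(K_{p,pn+1}) = p\,\tau(K)+\tfrac{(p-1)pn}{2}. \]
Since $\tau(K)<g(K)$, this yields $\tau(K_{p,pn+1})<g(K_{p,pn+1})$. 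Hence the contact invariant vanishes and $b<\infty$. (Alternatively, one can argue directly that the $\tau$ deficit forces $\Upsilon$-type slopes away from $-g$; we will use the $\tau$ criterion.)

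\textbf{Lower bound.} Set $b=b(K_{p,pn+1})$, now known to be finite. Proposition~\ref{prop:nonzero} shows that $\HFKhat(K_{p,pn+1},g-b)\neq 0$. It therefore suffices to show
\[ \HFKhat(K_{p,pn+1},\,g-j)=0 \quad\text{for } 1\le j\le p-1, \]
since this forces $b\ge p$. To prove this vanishing we use Hedden's description of $\HFKhat$ of $(p,pn+1)$ cables for $n$ sufficiently large (this is where $N_p$ enters). In the top Alexander gradings, $\HFKhat(K_{p,pn+1})$ is built from $p$ shifted copies of $\HFKhat(K)$. The topmost block is a copy of $\HFKhat(K)$ placed so that its top grading $g(K)$ sits at $g(K_{p,pn+1})$, but with each Alexander grading of $K$ spread out by a factor of $p$. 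Concretely, in the top range,
\[ \HFKhat(K_{p,pn+1},\,pa+c)\cong \HFKhat(K,\,a) \ \text{ for the appropriate shift } c, \]
while the intermediate gradings vanish.

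We then check this formula near the top. The grading $g(K_{p,pn+1})$ carries $\HFKhat(K,g(K))\cong\mathbb{F}$. The next copy, $\HFKhat(K,g(K)-1)$, appears only in grading $g(K_{p,pn+1})-p$. Gradings strictly between these two get no contribution from this block. We also need the remaining blocks, which come from the large-$n$ stabilized portion ("staircase" tails), to have top Alexander grading at most $g(K_{p,pn+1})-p$. The growth in $n$ of the gap between blocks should guarantee this.

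\textbf{Main obstacle.} The delicate step is extracting precise Alexander grading bookkeeping from Hedden's theorem. We must confirm that no summand lands in gradings $g-1,\dots,g-p+1$, and in particular how the large-$n$ hypothesis controls the lower blocks. We will first try Hedden's explicit formula for $\HFKhat$ of $(p,pn+1)$ cables in the top $\sim pn$ gradings, which is expressed in terms of $\HFKhat(K)$ and the filtered complex near the top. We then verify the grading gaps directly.
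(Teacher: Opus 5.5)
\textbf{There is a genuine gap in your lower bound.} Your reduction needs $\HFKhat(K_{p,pn+1}, g-j)=0$ for all $1\le j\le p-1$, and the case $j=1$ is false. Baldwin--Vela-Vick prove that the next-to-top group $\HFKhat(g-1)$ of any fibered knot of positive genus is nonzero.

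You can also see this from the Alexander polynomial, $\Delta_{K_{p,pn+1}}(t)=\Delta_K(t^p)\,\Delta_{T_{p,pn+1}}(t)$. With $G=\frac{(p-1)pn}{2}$, the torus-knot factor begins $t^{G}-t^{G-1}+t^{G-p}-\cdots$. Since $\Delta_K$ is monic, the coefficient of $t^{g-1}$ in the product is $\pm1$.

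So your picture of the top block is wrong. It is not a copy of $\HFKhat(K)$ spread out by a factor of $p$ with nothing in between. The top of $\HFKhat(K_{p,pn+1})$ occupies the two adjacent gradings $g$ and $g-1$, and only then comes a gap. Hedden's large-$n$ computation (his Lemma 3.6) gives vanishing only in gradings $g-2,\dots,g-p+1$.

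Consequently Proposition~\ref{prop:nonzero} only rules out $b\in\{2,\dots,p-1\}$, and you need a separate argument to rule out $b=1$. For $p=2$ your argument proves nothing at all, since there the statement is exactly $b>1$. The paper supplies the missing input as follows. By Kazez--Roberts, the monodromy of $K_{p,q}$ has fractional Dehn twist coefficient $\frac{1}{pq}>0$, so it is right-veering. By Baldwin--Ni--Sivek, right-veering implies $b>1$.

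\textbf{A smaller, repairable issue in your upper bound.} The formula $\tau(K_{p,pn+1})=p\tau(K)+\frac{(p-1)pn}{2}$ does not hold in general. When $\epsilon(K)=-1$ (for example $K=T_{2,-3}$), Hom's formula gives an extra $p-1$. Your conclusion still survives, because in every case
$\tau(K_{p,pn+1})\le p\tau(K)+\frac{(p-1)pn}{2}+p-1<p\,g(K)+\frac{(p-1)pn}{2}=g(K_{p,pn+1})$,
using $p(g(K)-\tau(K))\ge p$.

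The paper avoids computing $\tau$ of the cable altogether. It uses Hedden's result that a positive cable supports the same contact structure as $K$, so the vanishing $c(\xi)=0$ is inherited directly from $\tau(K)<g(K)$.
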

\begin{proof}
    Since $\tau(K) < g(K)$, the contact structure associated to the fibered knot $K$, denoted $\xi_K$, has vanishing Heegaard Floer contact invariant $c(\xi_K) = 0$ \cite{Hedden}. Since the contact structure associated to the positive $(p,pn+1)$ cable of the knot (which is also fibered), denoted $\xi_{p,pn+1}$, is the same \cite{Hedden-contact} as $\xi_K$, its contact invariant $c(\xi_{p,pn+1}) = 0$ must also vanish. 
    Therefore, $b(K_{p, pn+1})$ is finite. 
    
    For  cabling coefficients $p > 1, q >0$, the cable $K_{p,q}$ is fibered if $K$ is fibered, and its monodromy has fractional Dehn twist coefficient
    \[ c(\phi_{K_{p,q}})=\frac{1}{pq}>0 \] by \cite[Proposition~4.2]{KazezRoberts}. Hence its monodromy is right-veering. Right-veeringness of positive    cables implies that $b(K_{p, pn+1}) > 1$.
    
    We use  \cite[Lemma 3.6]{Hedden-cabling} to conclude that $b(K_{p, pn+1}) > p-1$. Let $N_p$ be the integer determined in \cite[Lemma 3.6]{Hedden-cabling}. Then the terms $\HFKhat(S^3,K_{p, pn+1}, i-g)$ of knot Floer homology  $i=2, \dots, p-1$ are zero. Thus, by Proposition~\ref{prop:nonzero} the $b$-invariant cannot equal any of $2, \dots, p-1$. Therefore, $b(K_
    {p, pn+1})>p-1$.
\end{proof}

The non-vanishing of the next-to-top term of knot Floer homology for fibered knots has found uses in knot detection results. We expect that Proposition~\ref{prop:nonzero} and Theorem~\ref{thm:rankthirdtotop} below could similarly be useful in knot detection applications for fibered knots. 

By extending techniques of Baldwin-Vela-Vick \cite{BVV}, Ni \cite{Ni} shows that the rank of the second-to-top term of knot Floer homology must be greater than 1 if the monodromy is neither right-veering nor left-veering. In a closely related direction to \cite[Theorem A.1]{Ni}, we also show that the third-to-top term of knot Floer homology of a fibered knot is nonzero when the monodromy is neither right-veering nor left-veering.   

\begin{theorem} \label{thm:rankthirdtotop}
Let $Y$ be a closed, oriented 3-manifold. 
    If $K$ is a fibered knot in $Y$ with fiber surface $\Sigma$ whose monodromy is not right-veering and not left-veering, then 
    \[\text{rank}(\HFKhat(Y, K, [\Sigma],  2-g)) \neq 0.\]
\end{theorem}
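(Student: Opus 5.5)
We follow the Baldwin--Vela-Vick and Ni strategy with the Heegaard diagram adapted to the open book. Choose a basis of arcs $a_1,\dots,a_{2g}$ on the page $\Sigma$. Since the monodromy $\phi$ is neither right- nor left-veering, Ni's argument \cite[Theorem A.1]{Ni} lets us arrange the following. There is a properly embedded arc $a$ with $\phi(a)$ strictly to the left of $a$ at its initial point. There is also an arc $a'$ (possibly after isotopy) with $\phi(a')$ strictly to the right of $a'$. We use these to choose a basis in which $a_1$ is moved to the left by $\phi$ and $a_2$ is moved to the right, both near the same boundary region. We also arrange that $a_1,a_2$ are disjoint from each other and meet $\partial\Sigma$ in adjacent endpoints. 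We then form the doubled Heegaard diagram for $(-Y,K)$ as in \cite{BVV}, with $\beta_i$ obtained from $a_i$ by a small pushoff and $\alpha_i$ from $\phi(a_i)$. We place the basepoints $z,w$ on either side of the binding in the region near $\partial \Sigma$.

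In this diagram the Alexander grading of a generator $\mathbf{x}$ is $g$ minus (up to a shift) the number of components of $\mathbf{x}$ lying in the ``top'' page region rather than at the distinguished intersection points $c_i=a_i\cap b_i$ near the boundary. Thus the top generator $\mathbf{c}=(c_1,\dots,c_{2g})$ alone spans the $A=g$ (equivalently $A=-g$, after orientation conventions) summand. Generators of the form $(x_1,x_2,c_3,\dots,c_{2g})$, with $x_1,x_2$ not the $c$-points, live in Alexander grading $g-2$, which by symmetry corresponds to the grading $2-g$ in the statement. The plan is to exhibit a specific generator $\mathbf{y}=(y_1,y_2,c_3,\dots,c_{2g})$ in this grading and show that it survives to homology. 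Here $y_1$ is an intersection of $\alpha_1$ with $\beta_1$ created by the leftward veering of $a_1$, and $y_2$ the analogous point for $a_2$ from the rightward veering. Equivalently, we show that $\mathbf{y}$ is a cycle that is not a boundary in the associated graded complex $\widehat{CFK}(-Y,K,\cdot)$ restricted to this grading.

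The key steps, in order, are as follows. (1) Set up the adapted diagram and record the Alexander grading formula for generators. (2) Use the veering hypotheses to choose the points $y_1,y_2$ so that they are ``outermost'' in the sense of \cite{BVV}. Concretely, the small bigon or rectangle regions emanating from $y_i$ toward the boundary must contain $z$ or $w$ or leave the page, so they do not count in $\widehat{\partial}$. (3) Show that every Maslov index one domain, with $n_z=n_w=0$ and preserving the Alexander grading, starting or ending at $\mathbf{y}$ either does not exist or cancels in pairs. Following Ni, we reduce to domains supported in a neighborhood of $a_1\cup a_2\cup\phi(a_1)\cup\phi(a_2)$ near the boundary. (4) Conclude that $\mathbf{y}$ represents a nonzero class, so $\HFKhat(Y,K,[\Sigma],2-g)\neq 0$ after applying the symmetry $\HFKhat(-Y,K,i)\cong\HFKhat(Y,K,-i)$ and the Alexander grading symmetry.

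The main obstacle is step (3), specifically ruling out incoming differentials that hit $\mathbf{y}$. In the next-to-top case \cite{BVV}, only one coordinate differs from $\mathbf{c}$, and the veering direction directly forces any domain to cross a basepoint. Here two coordinates move, and a domain might interact between $a_1$ and $a_2$. My first attempt would be to choose $a_1$ and $a_2$ so that they cobound, with arcs of $\partial\Sigma$, no disk in $\Sigma$ except through the basepoint region. Then any positive domain connecting $\mathbf{y}$ to another grading-$(g-2)$ generator splits into pieces that each separately face the one-arc argument of \cite{BVV} (leftward for $a_1$, rightward for $a_2$). This would give positivity of multiplicities at $z$ or $w$ and hence vanishing of the relevant coefficients. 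If the pieces cannot be separated, I would instead use a filtration or Euler-characteristic-free argument: I would show that $\mathbf{y}$ is the unique generator in its relative Maslov grading within Alexander grading $g-2$ among those supported near the chosen arcs.
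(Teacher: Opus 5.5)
\textbf{There is a genuine gap: step (3), where all the content lies, is not an argument as written.}

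The veering hypotheses only control the small boundary-adjacent bigons joining $\mathbf{c}$ to generators with one coordinate moved. They say nothing about the Alexander-grading-preserving Maslov index one domains into or out of $\mathbf{y}=(y_1,y_2,c_3,\dots,c_{2g})$. Those are domains between generators with two coordinates on the $\phi$-page. Nothing in your setup prevents them from running over the whole surface and using the other $\alpha_j,\beta_j$, trading $y_i$ and some $c_j$ for other intersection points while keeping two coordinates on that page. Your three supporting moves do not close this:
\begin{itemize}
\item Neither \cite{Ni} nor \cite{BVV} gives a reduction to domains supported near $a_1\cup a_2\cup\phi(a_1)\cup\phi(a_2)$.
\item Requiring that $a_1,a_2$ cobound no disc does not make a domain split into pieces each governed by the one-arc bigon argument.
\item Uniqueness of $\mathbf{y}$ in its Maslov grading ``among generators supported near the chosen arcs'' does not exclude differentials to or from generators that are not so supported.
\end{itemize}
Even the next-to-top nonvanishing in \cite{BVV} (and Proposition~\ref{prop:nonzero} here) is not proved by certifying that a particular generator survives. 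It is deduced algebraically from $\partial\mathbf{d}=\mathbf{c}$. You also leave unaddressed how to make a non-right-veering arc and a non-left-veering arc disjoint and simultaneously part of one basis.

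\textbf{The missing idea is to argue with homology-level maps in a reduced model, not with one generator of one diagram.}
\begin{itemize}
\item By \cite[Lemma 4.5]{Rasmussen-thesis}, replace the complex by one whose differential has no component preserving both filtrations. Then basis elements give a basis of $\HFKhat$, and $\mathbf{c}$ is the only one in grading $-g$.
\item By \cite{Ni}, non-left-veering makes $(d_z)_*\colon\HFKhat(1-g)\to\HFKhat(-g)$ nonzero, so some $\mathbf{y}$ has $d_z\mathbf{y}=\mathbf{c}$.
\item Non-right-veering makes $(d_w)_*\colon\HFKhat(-g)\to\HFKhat(1-g)$ nonzero, so $d_w\mathbf{c}=\mathbf{a}\neq 0$.
\item The $(-1,-1)$-shifted component of $d^2\mathbf{y}=0$ is exactly $d_zd_w\mathbf{y}+d_wd_z\mathbf{y}$, since reducedness kills every term involving a $(0,0)$ component.
\item Hence $d_zd_w\mathbf{y}=\mathbf{a}\neq 0$, so $d_w\mathbf{y}$ is a nonzero element in Alexander grading $2-g$.
\end{itemize}
Your points $y_1,y_2$ are the diagrammatic shadow of this square: $\mathbf{c}$ is hit vertically on one side and emits a horizontal arrow on the other. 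In the unreduced complex of your diagram, however, the $(-1,-1)$ component of $d^2$ also contains $d_{(0,0)}d_{(-1,-1)}+d_{(-1,-1)}d_{(0,0)}$. So the square alone certifies nothing about your generator in homology. Passing to the reduced complex is exactly what removes those cancelling terms.
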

 Theorem~\ref{thm:thirdtotop} states a  stronger conclusion than Theorem~\ref{thm:rankthirdtotop}. 

\subsection{Acknowledgements} The author was partially supported with an NSF CAREER grant DMS-2237131.

\subsection{AI Disclosure} ChatGPT Sol assisted in computing the Upsilon function for the cable knots in  Corollary~\ref{example:upsiloncables}.  ChatGPT Astra was used to create a figure.

\section{Background}

\subsection{Conventions} We work over $\mathbb{F}=\mathbb Z/2\mathbb Z$. We will assume the reader is familiar with knot Floer homology \cite
{OS-knots}.  
 Let $K$ be a null-homologous knot in a rational homology 3-sphere $Y$. We will sometimes work with the mirror image $-(Y,K, \mathfrak{s}) = (-Y, -K, \overline{\mathfrak{s}}) $ of $(Y,K, \mathfrak{s})$. Here $-Y$ denotes $Y$ with the reverse orientation and $-K$ denotes the knot $K$ with the reverse orientation.

A generator of the knot Floer complex $\CFK(Y,K
)$, a module over $\mathbb{F}[U, U^{-1}]$, will be written as 
\[
        [x,i,j],
\]
where $i$ is the algebraic filtration level and $j$ is the Alexander filtration level. A bold generator $\mathbf{x}$ refers to its algebraic filtration level $i=0$ representative $\mathbf{x}= [x, 0, A(x)]$.
The differential in $\CFK(Y,K)$  decreases (non-strictly) both the $i$- and $j$-filtrations.  The action of $U$ is given by 
\[
        U[x,i,j]=[x,i-1,j-1].
\]

\subsection{Upsilon invariant}
We recall the definition of the Upsilon function $\Upsilon_K(t)$ of a knot \cite{OSS}. 
For $t\in[0,2]$, define the $t$-filtration on the knot Floer complex as follows.  
For a chain $\theta=\sum_k [x_k,i_k,j_k]$, define
\[
        F_t(\theta)=\max_k f_t([x_k,i_k,j_k])
\]
where 
\[
        f_t([x,i,j])=\bigl(1-\tfrac t2\bigr)i+\tfrac t2 j.
\]
For a fixed $t$, the function $F_t$ defines a real-valued function on the knot Floer complex, and descends to a filtration on the knot Floer complex (cf. \cite{Livingston}). For each real number $s$, the $t$-filtration level $s$ of $\CFK(Y,K, \mathfrak{s})$ is the subcomplex spanned by elements $\theta \in \CFK(Y,K, \mathfrak{s}) $ where $F_t(\theta) \leq s$. 
Following Livingston's reformulation of $\Upsilon_K(t)$ for knots in $S^3$, which was generalized to the setting of null-homologous knots in rational homology 3-spheres in \cite{ACS}, we define Upsilon as follows.

\begin{definition}
Let $ \CFK(Y,K, \mathfrak{s})$ be the knot Floer chain complex for a null-homologous knot $K$ in a rational homology 3-sphere $Y$ and a spin$^c$ structure $\mathfrak{s}$ for $Y$. Let 
\begin{align*}
        \nu_t(\CFK(Y,K, \mathfrak{s}))=\min\{F_t(\theta)\mid \theta \in \CFK(Y,K, \mathfrak{s})
        \text{ is a homogeneous cycle } \\ \text{of Maslov grading } d(Y, \mathfrak{s}),
        \\
        \ [\theta]\neq 0\in H_*(\CFK(Y,K, \mathfrak{s}))\}
\end{align*}
and define the Upsilon invariant as
\[
        \Upsilon_{Y,K, \mathfrak{s}}(t)=-2\nu_t(\CFK(Y,K, \mathfrak{s})).
\]
\end{definition}
The Upsilon invariant is a knot concordance invariant \cite{OSS} in the classical sense for knots in $S^3$. Upsilon also gives a spin$^c$ rational concordance invariant for null-homologous knots in rational homology 3-spheres $Y$ equipped with a spin$^c$ structure $\mathfrak{s}$ \cite{ACS}.

\subsection{The contact invariant}  \label{section:contact}
We review the Honda-Kazez-Mati\'c description of the contact invariant \cite{HondaKazezMatic}.
Suppose $K \subset Y$ is a fibered knot of genus $g$ and let $(\Sigma, \phi)$ be an open book corresponding to $K$. Choose a basis $\{a_1, \dots, a_{2g}\}$ for $\Sigma$, that is, a set of properly embedded, disjoint arcs in $\Sigma$ such that the complement $\Sigma \setminus \cup_i \alpha_i$ is a disk. Let $b_i$ be an isotopic copy of $a_i$, by taking a small translate, where the endpoints of $b_i$ are obtained by pushing the endpoints of $a_i$ in the direction of the orientation of $\partial \Sigma$, and taken so that there is a single transverse intersection point 
$a_i \cap b_i = \{c_i\}$. 
We will construct a Heegaard diagram \[(S, \boldsymbol{\alpha} = \{\alpha_1, \dots, \alpha_{2g}\}, \boldsymbol{\beta}=\{\beta_1, \dots, \beta_{2g}\}, w)\] for $Y$ adapted to the open book, which consists of the data:
\begin{itemize}
    \item $S = \Sigma \cup -\Sigma$  is the ``double" of the fiber surface, that is, the union of two copies of $\Sigma$ glued along their boundaries,
    \item $\alpha_i = a_i \cup a_i \subset S$ is the ``double" of the arc $a_i \subset \Sigma$, that is, the union of two copies of $a_i$ glued along their endpoints,
    \item $\beta_i = b_i \cup \phi(b_i) \subset S$,
    \item $w \in \Sigma \setminus \cup_{\alpha_i \in \boldsymbol{\alpha}} \alpha_i \setminus \cup_{\beta_i \in \boldsymbol{\beta}} \beta_i \subset S $ is a basepoint in the complement of the curves in $\boldsymbol \alpha$ and $\boldsymbol \beta$ and the thin regions bounded by the isotopies between $a_i$ and $b_i$.
\end{itemize}
By reversing the roles of $\boldsymbol{\beta}$ and $ \boldsymbol{\alpha} $, we have a Heegaard diagram $(S, \boldsymbol{\beta}, \boldsymbol{\alpha}, w)$ for $-Y$.
Honda, Kazez, and Mati\'c show that the generator
\[\boldsymbol{c} := \{c_1, \dots, c_{2g}\}\]
is a cycle whose homology class $[\boldsymbol{c}]$ in $\HFhat(-Y)$ coincides with the contact class, an invariant for contact structures originally defined by Ozsv\'ath and Szab\'o \cite{OS-contact}. 

We can also construct a doubly pointed Heegaard diagram for $K \subset Y$ as follows.  Starting in the region next to $b_1$ in the direction of the orientation of the boundary  $\partial \Sigma$, we perform finger moves on the $\beta_i$ curves in a small neighborhood of the boundary $\partial \Sigma$ pushing in the direction of the orientation on $\partial \Sigma$, until we return to the start. We place an additional basepoint $z$ in the region next to $a_1$ and the pushed $\beta_i $ curves.  Then $\partial \Sigma$ is the union of an arc from $z$ to $w$ in the complement of the $\alpha$ curves and an arc from $w$ to $z$ in the complement of the $\beta$ curves. 

We will consider $\mathcal{H} = (S, \boldsymbol{\beta}, \boldsymbol{\alpha}, z, w)$, the Heegaard diagram for $-K \subset -Y$ obtained by reversing the roles of $\boldsymbol{\beta}$ and $ \boldsymbol{\alpha} $ and the roles of $z$ and $w$.  The contact generator $\mathbf c =\{ c_1, \dots, c_{2g}\}$ is also a cycle in the complex $ \widehat{CFK}(\mathcal{H})$. Moreover, its homology class is nonzero in the bottom-most filtration level $0 \neq [\mathbf c] \in H_*(\mathcal{F}_{-g}) \cong \mathbb{F}$ by \cite[Theorem 2.3]{BVV}. 

\subsection{The $b$-invariant}
We recall the definition of the Baldwin--Vela-Vick invariant $b(K)$ of a fibered knot $K$ in a closed, oriented 3-manifold $Y$ \cite{BVV}. Let
\[
         \cF_{-g}\subset \cF_{1-g}\subset\cdots\subset \cF_g=\CFhat(-Y)
\]
denote the Alexander filtration on $\CFhat(-Y)$ induced by the fibered knot.  The bottom filtration level is generated in homology by the contact class $[\mathbf{c}]$:
\[
        H_*(\cF_{-g})=\mathbb{F}\langle [\mathbf{c}] \rangle
\]
where $[\mathbf{c}]\in \HFhat(-Y)$
represents the Ozsvath--Szabo contact invariant of the contact structure supported by the open book of $K$.

Let 
\begin{equation}\label{eq:b-definition}
        b(K)=
        \begin{cases}
        \infty, & [\mathbf{c}]\neq 0\in \HFhat(-Y),\\[2mm]
        g+\min\{k\mid [\mathbf{c}]=0\text{ in }H_*(\cF_k)\}, & [\mathbf{c}]=0\in \HFhat(-Y).
        \end{cases}
\end{equation}
Baldwin-Ni-Sivek \cite{BNS} show that $b(K)$ detects right-veeringness of the monodromy of $K$.

\section{Bounding $b(K)$ with Upsilon}
In the proof of Theorem~\ref{thm:main}, we follow the same strategy as \cite{HHT}. The main idea is as follows. If the slope of Upsilon is $-g(K)$
 of a fibered knot $K$, then the contact generator is contributing to Upsilon. If the $b$-invariant is finite, there is a vertical arrow of length $b(K)$ in the differential of the knot Floer complex hitting the contact generator. However,  the contact generator cannot be a boundary in the full knot Floer complex $\CFK(Y,K)$ since it contributes to Upsilon. Thus, there must be a non-vertical arrow (pointing to the left (and possibly downwards)) to some other generator(s) (which have Alexander grading greater than $-g(K)$). Then at every $t$ sufficiently close to $t=0$, those other generator(s)  contribute to Upsilon instead of the contact generator.

\subsection{Proof of Theorem \ref{thm:main}}

First, we prove Proposition~\ref{prop:contactgenerator} which will be useful in the proof of Theorem~\ref{thm:main}. The proposition is a refinement of \cite[Lemma 2.1]{Hom-cables}, which shows, through a careful change of basis, that the knot Floer complex is homotopy equivalent to a reduced complex. A reduced complex means that the differential has no component preserving both algebraic and Alexander filtration levels.  That is, for every chain, the differential of that chain strictly drops either the Alexander or algebraic filtration or both. We will keep track of the contact generator throughout the procedure of the change of basis.

\begin{proposition}[cf. \cite{HHT}]\label{prop:contactgenerator}
Suppose $K \subset Y$ is a fibered knot of genus $g$ and let $(\Sigma, \phi)$ be an open book corresponding to $K$. Let $\mathcal{H} = (S, \boldsymbol{\beta}, \boldsymbol{\alpha}, z, w)$ be the Heegaard diagram for $-K \subset -Y$ described in Section~\ref{section:contact}.  There is a reduced doubly filtered complex $C$ and a filtered
chain homotopy equivalence
\[
CFK^-(\mathcal H)\simeq C
\]
which fixes the contact generator $\mathbf c$, such that
$\mathbf c$ is the unique basis element of $C$ with 
Alexander grading $-g$.
\end{proposition}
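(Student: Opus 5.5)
The plan is to run Hom's reduction procedure \cite[Lemma 2.1]{Hom-cables} on $CFK^-(\mathcal H)$, choosing the order of the cancellations so that $\mathbf c$ is never modified. Two facts about the adapted diagram drive the argument. First, in the diagram $\mathcal H$ of Section~\ref{section:contact}, $\mathbf c$ is the \emph{only} intersection generator in Alexander grading $-g$. Every other generator has at least one component off the points $c_i$, and these components sit in Alexander grading strictly larger than $-g$; this is the computation behind \cite[Theorem 2.3]{BVV}, where $\mathcal F_{-g}$ is spanned by $\mathbf c$. Second, $\mathbf c$ is a cycle, since $H_*(\mathcal F_{-g})\cong\mathbb F\langle[\mathbf c]\rangle$ and it is the unique generator there. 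So the starting complex already has $\mathbf c$ as the unique basis element of Alexander grading $-g$, and the task is to keep this property through the reduction.

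Hom's procedure works as follows. Whenever the differential has a component $\partial x = y + (\text{other terms})$ with $x$ and $y$ in the same $(i,j)$ bifiltration level, we change basis. Replace $x$ by $x$ itself and $y$ by $\partial x$. Replace every other generator $w$ whose differential contains $y$ by $w + (\text{coefficient})\,x$. Then cancel the acyclic pair $\{x,\partial x\}$. This gives a filtered chain homotopy equivalence onto a complex with two fewer generators over $\mathbb F[U]$. We iterate until no filtration-preserving components remain, which terminates because the diagram has finitely many generators. The key step is to show that no cancellation involves or alters $\mathbf c$.

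\textbf{$\mathbf c$ is never cancelled as $x$.} Since $\partial\mathbf c=0$, the generator $\mathbf c$ never has an outgoing filtration-preserving arrow.

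\textbf{$\mathbf c$ is never cancelled as $y$.} Suppose some $x$ had $\partial x\ni \mathbf c$ with the same $(i,j)$ level. Then $x$ has Alexander grading $-g$ at algebraic level $0$, so $x$ lies in $\mathcal F_{-g}$, and $\mathbf c$ is the only generator there, a contradiction. In the $U$-translated picture the same argument applies: a generator $U^k x$ at the same level as $\mathbf c$ would force $A(x)=-g+k$ and $i=-k$, and for this to preserve $(i,j)$ we need $k=0$.

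\textbf{$\mathbf c$ is never modified in a basis change.} A modification only happens to generators whose differential contains $y$, and $\partial\mathbf c=0$, so this never applies to $\mathbf c$.

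The remaining point is that $\partial\mathbf c=0$ persists after each step. In the reduced complex the differential is the composite $\pi\circ\partial\circ\iota$, corrected by the homotopy terms $\partial x$ and $x$ from the cancellation. Applied to $\mathbf c$, this composite is still zero. It also remains true after each step that no other basis element has Alexander grading $-g$. The basis changes replace generators by sums of generators at the same or lower bifiltration level, but every such sum includes the original generator, whose Alexander grading is $>-g$. Cancellation only removes elements.

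Since the reduction never creates new grading levels, at the end $C$ is reduced, $\mathbf c$ survives unchanged, and it is the unique basis element with Alexander grading $-g$. I expect the main obstacle to be the careful bookkeeping of the homotopy equivalence formulas. In particular, one must check that the inclusion and projection maps send $\mathbf c$ to itself, so that "fixes $\mathbf c$" holds on the nose rather than up to adding elements of higher Alexander grading. This relies on $\mathbf c$ having no outgoing arrows and on the filtration-preserving incoming arrows into $\mathbf c$ being excluded by the uniqueness of $\mathbf c$ in $\mathcal F_{-g}$.
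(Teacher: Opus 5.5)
Your overall strategy (run Hom's reduction while tracking $\mathbf c$) is the paper's. The gap is the premise your argument rests on: that $\mathbf c$ is the only intersection point of $\mathcal H$ in Alexander grading $-g$. What [BVV, Theorem 2.3] provides is the homological statement $H_*(\mathcal F_{-g})\cong\mathbb F\langle[\mathbf c]\rangle$. Chain-level uniqueness is not true for the finger-moved diagram in general.

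The Alexander grading of a generator depends only on how many of its components lie in the $\Sigma$ half of $S$; this follows by using the page as a relative periodic domain. So every generator with all $2g$ components in $\Sigma$ has grading $-g$, and the finger moves create such components. For instance, the finger of $\beta_1$ dragged along $\partial\Sigma$ to the bunch next to $z$ crosses $\alpha_1$ just inside $\Sigma$ at a point $x_1^+$, and then $\{x_1^+,c_2,\dots,c_{2g}\}$ is another generator in grading $-g$.

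Consequently two of your steps are unsupported:
\begin{itemize}
\item ``$\mathbf c$ is never cancelled as $y$'' assumes nothing else lies in $\mathcal F_{-g}$.
\item ``no other basis element ends up in grading $-g$'' assumes every original generator other than $\mathbf c$ has grading $>-g$.
\end{itemize}
The missing idea is to use the rank-one statement itself, twice, as the paper does. First, suppose a filtration-preserving arrow out of $x$ had $\mathbf c$ as its only same-level target. Then $A(x)=-g$, and since $\mathcal F_{-g-1}=0$ we get $\partial^{\mathrm{vert}}x=\mathbf c$, which kills $[\mathbf c]$ in $H_*(\mathcal F_{-g})$. So a cancellation partner $x_k\neq\mathbf c$ can always be chosen. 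Second, once no filtration-preserving arrows remain, $\mathcal F_{-g}$ of the reduced complex is spanned by its grading-$(-g)$ generators and has zero differential. Hence their number equals $\operatorname{rank}H_*(\mathcal F_{-g})=1$, and since $\mathbf c$ was never cancelled, it is the survivor.

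A second, smaller problem concerns $\partial\mathbf c=0$. This holds only for the hat (vertical) differential, by Honda--Kazez--Mati\'c, not in $CFK^-(\mathcal H)$; and deducing it from uniqueness in $\mathcal F_{-g}$ is backwards in any case. In the full complex, $\mathbf c$ can have arrows with $n_w>0$. For example, take $K$ the right-handed trefoil: if $\mathbf c$ were a cycle in $\CFK(\mathcal H)$, it would force $\Upsilon_{-K}(t)\ge t$ on all of $[0,2]$, which fails at $t=2$. So Hom's change of basis $x_l'=x_l+(\partial_k x_l)x_j$ can replace $\mathbf c$ by $\mathbf c+U^m x_j$. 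Your closing claim, that the inclusion and projection fix $\mathbf c$ because it has no outgoing arrows, therefore does not hold as stated. This is why the paper runs the cancellation in $C^{\mathrm{vert}}=CFK^-(\mathcal H)/U\,CFK^-(\mathcal H)$, where $\partial^{\mathrm{vert}}\mathbf c=0$ genuinely holds and $\mathbf c$ is never modified.
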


\begin{proof}
Let
\[ C^{\mathrm{vert}} = CFK^-(\mathcal H)\{i=0\} \cong
CFK^-(\mathcal H)/UCFK^-(\mathcal H) \]
 which comes equipped with differential $\partial^\text{vert}$, which we will denote by $\partial$ for the remainder of the proof. Let $\{x_i\}$ be the filtered basis (over $\mathbb{F}$) of generators $\mathbb{T}_\alpha \cap \mathbb{T}_\beta$ for $C^\text{vert} $. Note that one of the generators $\{x_i\}$ is equal to the contact generator $\mathbf{c}$. For a nonzero chain $\theta$, let $A(\theta)$ denote its Alexander
filtration level, namely the maximum Alexander grading among its
nonzero summands. 

We will perform the first step of the procedure in the proof of \cite[Lemma 2.1]{Hom-cables}, which will simplify the set $B_0$ with respect to the basis $\{x_i\}$, while keeping track of what happens to the  contact generator $\mathbf{c}$.  The set $B_0$ is defined to be the set of generators $x_i$ which have nonzero differential $\partial x_i \neq 0$ and satisfy the degree restriction $A(\partial x_i) = A(x_i )$.  Since the contact generator $\mathbf{c}$ is a cycle, $\mathbf{c} \notin B_0$. Moreover, observe that for any generator  $x \in C^\text{vert}$ with Alexander grading $A(x) = -g$ and $\partial x \neq 0$, then $x \in B_0$ since $0 = \mathcal{F}_{-g-1} \subset \mathcal{F}_{-g}$.

If $\partial x = \sum c_j x_j, $ then we define $\partial_j x = c_j$. 
For each $x_j \in B_0$, we can find a generator $x_k \neq \mathbf{c}$ such that $\partial_k x_j = 1$ and $A(x_k) = A(x_j)$. Note that we can choose $x_k \neq \mathbf{c} $, because otherwise, $\partial x_j = \mathbf{c}$, and since $x_j \in B_0$ we have that  $A(x_j) = A(\mathbf{c}) = -g$, but then $\mathbf{c}$ would vanish in $H_*(\mathcal{F}_{-g})$, contradicting \cite[Theorem 2.3]{BVV}. Next, as in  \cite[Proof of Lemma 2.1]{Hom-cables}, given a $x_j \in B_0$ we perform the filtered change of basis

\begin{align*}
    x_j' &= x_j
    \\
    x_k' &= \partial x_j
    \\
    x_l ' &= x_l + (\partial_k x_l) x_j, \ \ \ l \neq j, k
\end{align*}
Note that the contact generator $\mathbf{c}$ remains fixed during this basis change. Moreover, Hom shows that this is a filtered change of basis, and that the pair $\{x_j', \partial x_j'\}$ splits  as a direct summand in $C^\text{vert}$. 

We iterate this process for each generator $x_j \in B_0$.  After this change of basis, we can now discard the acyclic
summand $B_0 \cup \partial B_0$ to obtain a reduced complex, since in this case, the differential strictly lowers the Alexander filtration level.   Then the only generators remaining in Alexander grading $-g$ are cycles and generate $H_*(\mathcal{F}_{-g})$. However, by \cite[Theorem 2.3]{BVV}, $H_*(\mathcal{F}_{-g}) \cong \mathbb{F}$ and  is generated by the contact class $[\mathbf{c}]$. Thus, in the reduced complex, we have that $\mathbf{c}$ is the only generator  in Alexander grading $-g$. This basis also gives a basis for a reduced complex $C$ which is filtered  chain homotopy equivalent to $CFK^-(\mathcal{H})$. The complex $C$ is reduced since the differential either drops the Alexander filtration level, or the differential drops the algebraic filtration level. 
\end{proof}

We are now ready to prove the theorem.


\begin{proof}[Proof of Theorem~\ref{thm:main}]
    Suppose $\Upsilon'_{Y,K, \mathfrak{s}}(t)=-g(K)$ for some $t\in[0,\frac{2}{n+1})$. By \cite[Theorem 3.4]{HHT} (cf.  \cite[Theorem 7.1]{Livingston}),  if $\Upsilon_{Y,K, \mathfrak{s}}(t)$ is differentiable at $t$ and a generator $[\mathbf{x},i,j]$ realizes $\Upsilon_{Y,K, \mathfrak{s}}(t)$, then $\Upsilon'_{Y,K, \mathfrak{s}}(t) = i - j = -A(\mathbf{x}).$ Since Upsilon is a group homomorphism \cite[Proposition 4.1]{ACS}, we have that $\Upsilon_{Y,K, \mathfrak{s}}(t) = -\Upsilon_{-Y,- K, \overline{\mathfrak{s}}}(t)$.

Let $\mathcal{H} = (S, \mathbf{\beta}, \mathbf{\alpha}, z, w)$ be the Heegaard diagram for $-K \subset -Y$, as described in \cite{BVV}. 
Now we will consider the complex $\CFK(\mathcal{H})$  associated to the Heegaard diagram $\mathcal{H}$.  Using Proposition~\ref{prop:contactgenerator}, we arrive at a reduced complex $C$, which is filtered chain homotopy equivalent to $\CFK(\mathcal{H})$, with the property that if a generator $\mathbf{x} \in C$ has Alexander grading $A(\mathbf{x}) = -g$, then $\mathbf{x} \in C$ must be the contact generator $\mathbf{c} \in C$. For the remainder of the proof, we work with the reduced complex $C$.

    Suppose $b(K) \leq n$. Then there exists a homogeneous chain $\mathbf{y} \in C$ with Alexander filtration level $A(\mathbf{y}) \leq  n-g$ such that $\widehat{\partial} \mathbf{y} = \mathbf{c}$. 
    Then, $\partial^\infty \mathbf{y} = \mathbf{c} + \boldsymbol{\rho}$, where $\boldsymbol{\rho} $ is nonzero since  $\mathbf{c}$ contributes to Upsilon and therefore cannot be a boundary. Observe that $\boldsymbol{\rho} \in C\{i \leq -1, j \leq n-g\}$. Then for any  integer $m$, 
    \begin{align*}
        F_t(U^{-m}\boldsymbol{\rho}) &\leq (1-\frac{t}{2})(m-1) + \frac{t}{2}(n-g+m),
        \\
        F_t(U^{-m}\boldsymbol{\rho}) - f_t(U^{-m}\mathbf{c}) &\leq (1-\frac{t}{2})(m-1) + \frac{t}{2}(n-g+m) - (1-\frac{t}{2})(m) - \frac{t}{2}(-g+m),
        \\
        F_t(U^{-m}\boldsymbol{\rho}) - f_t(U^{-m}\mathbf{c}) &\leq  \frac{t}{2}(n+1) -1.
    \end{align*}
    Thus, for $t<\frac{2}{n+1}$, we have that  $F_t(U^{-m}\boldsymbol{\rho}) < f_t(U^{-m} \mathbf{c})$. See Figure~\ref{fig:upsilonproof}. 

        Let $\boldsymbol{\eta} \in \CFK(-Y, -K, \overline{\mathfrak{s}})$ be a homogeneous cycle with $F_t(\boldsymbol{\eta}) = \nu_t(-Y, -K, \overline{\mathfrak{s}})$ where $t\in[0,\frac{2}{n+1})$ is the point where the slope of Upsilon $\Upsilon'_{Y,K, \mathfrak{s}}(t)=-g(K)$ is minimal. Then $-\Upsilon'_{-Y,-K, \overline{\mathfrak{s}}}(t)=-g(K) = A(\mathbf{c})$. We have that the class $[\boldsymbol{\eta}]$ is nonzero in homology, its Maslov grading is $d(-Y, \overline{\mathfrak{s}})$ and we write   $\boldsymbol{\eta} = U^{-m} \mathbf{c} + \boldsymbol{\eta}', $ where $\mathbf{c} = [\mathbf{c}, m, -g+m] $ is the contact generator and where $\boldsymbol{\eta}'$ does not contain a ($U$-power of) $\mathbf{c}$ summand. The integer $m$ is the $U$-power of the contact generator $\mathbf{c}$ appearing as a summand in $\boldsymbol{\eta}$ and is uniquely determined by the Maslov grading of $\boldsymbol{\eta}$. 

    As in \cite{HHT}, we  construct a cycle $\boldsymbol{\delta} = \boldsymbol{\eta} + \partial^\infty U^{-m}\mathbf{y} = \boldsymbol{\eta}' + U^{-m}\boldsymbol{\rho} $, which is homologous to $\boldsymbol{\eta}$. Moreover, the contact generator $\mathbf{c} =[\mathbf{c},0,-g]$ is not a summand of either $\boldsymbol{\rho}$ (by filtration reasons) or $\boldsymbol{\eta}'$. Moreover, since the chain $\boldsymbol{\rho}$ is in the same Maslov grading as $\mathbf{c}$, the chain $\rho$ contains no $U$-powers of $\mathbf{c}.$ Thus  no generator summand in the chain $\boldsymbol{\delta} $ lives in Alexander grading $-g$. 
    Recall that $F_t(\boldsymbol{\eta}) = \max \{ F_t(\boldsymbol{\eta'}), f_t(U^{-m}\mathbf{c})\} $ and $F_t(\boldsymbol{\delta}) = \max \{F_t(\boldsymbol{\eta}'), F_t(U^{-m}\boldsymbol{\rho})\} $. For $t<\frac{2}{n+1}$,  we have that 
    \[F_t(\boldsymbol{\eta}) \geq F_t(\boldsymbol{\delta}) \geq \nu_t(-Y,-K, \overline{\mathfrak{s}}). \]

    \begin{figure}
        \centering 
    
\begin{tikzpicture}[x=1.15cm,y=1.05cm,font=\small,>=Stealth,
  every node/.style={text=ink},
  boundary/.style={rhocolor,line width=.8pt},
  relation/.style={ink,line width=.75pt,dash pattern=on 3pt off 2.1pt}]
\def\Xmin{-4.3}
\def\Ymin{-1.45}
\def\Nheight{2.8}
\def\IllustrativeGenus{2}
\def\IllustrativeN{3}
\ifcontactlevel
  \def\LineOffset{0}
\else
  \pgfmathsetmacro{\LineOffset}{-\Nheight*\IllustrativeGenus*(\IllustrativeN-1)/(2*\IllustrativeN)}
\fi

\fill[rhofill] (\Xmin,\Ymin) rectangle (-1,\Nheight);
\begin{scope}
  \clip (\Xmin,\Ymin) rectangle (-1,\Nheight);
  \fill[pattern=north east lines,pattern color=rhocolor!17]
    (\Xmin,\Ymin) rectangle (-1,\Nheight);
\end{scope}
\draw[boundary] (\Xmin,\Nheight)--(-1,\Nheight)--(-1,\Ymin);

\begin{scope}
  \clip (\Xmin,\Ymin) rectangle (1.80,4.07);
  \draw[levelcolor,line width=1.05pt]
    (-4, {4.5*\Nheight+\LineOffset})--(1.5,{-1.9*\Nheight+\LineOffset});
\end{scope}
  \node[text=levelcolor,align=center,font=\small] at (-1.20,5.04)
    {$ t < \dfrac{2}{n+1}$};

\draw[ink!55,line width=.6pt,-{Stealth[length=5pt]}]
  (0,-1.53)--(0,4.06) node[above=2pt,text=ink] {$j$};
\draw[ink!32,line width=.45pt,dash pattern=on 2pt off 2pt]
  (-1,\Nheight)--(0,\Nheight);
\draw[ink!35,line width=.45pt,dash pattern=on 2pt off 2pt]
  (-1,\Nheight)--(-1,3.52);
\node[font=\footnotesize] at (-.60,3.60) {$i=-1$};
\node[below=4pt] at (0,-1.53) {$i=0$};
\node[above=3pt,text=rhocolor] at (-2.78,\Nheight) {$j=n-g$};

\draw[ink!60,line width=.6pt,-{Stealth[length=5pt]}]
  (.70,3.66)--(1.67,3.66) node[right=2pt,text=ink] {$i$};

\draw[relation,-{Stealth[length=5pt,width=4pt]}]
  (0,2.61)--(0,.19);
\node[right=5pt,font=\small] at (0,1.35) {$\partial^{\mathrm{vert}}$};
\draw[relation,-{Stealth[length=4.5pt,width=3.5pt]}]
  (-.10,2.72) -- (-1.55,2.00);  

\draw[fill=white,draw=ink,line width=.9pt] (0,\Nheight) circle[radius=2.6pt];
\fill[ink] (0,0) circle[radius=2.3pt];
\node[right=7pt,align=left] at (0,3.02) {$\mathbf{y}$};
\node[right=7pt,align=left,font=\small] at (0,2.71) {$(0,n-g)$};
\node[right=7pt,align=left] at (0,0.02) {$\mathbf{c} $};
\node[right=7pt,align=left,font=\small] at (0,-.31) {$(0,-g)$};
\node[right=7pt,align=left] at (-1.99, 1.80) {$\boldsymbol{\rho}$};

\node[align=center,text=rhocolor,fill=rhofill,inner sep=4pt,font=\footnotesize]
  at (-2.91,1)
  {$\operatorname{supp}(\boldsymbol{\rho})\subseteq$\\[4pt]
   $\{i\le-1,\ j\le n-g\}$};
\draw[decorate,decoration={brace,amplitude=4pt},ink!65,line width=.5pt]
  (1.70,\Nheight)--(1.70,0)
  node[midway,right=7pt] {$n$};
\end{tikzpicture}
    \caption{ The green line shows a line in the $(i,j)$-plane of slope corresponding to $t< \frac{2}{n+1}.$  If $b(K) = n$, then $\partial \mathbf{y}  = \boldsymbol{ \rho} + \mathbf{c}$, where $\boldsymbol{\rho}$ lies in the region $ \{ i \leq -1, j \leq n-g\}$. Then $\boldsymbol{\rho}$ belongs to the lower-half plane bounded by the green line, whereas the contact generator $\mathbf{c}$ does not. }
    \label{fig:upsilonproof}
    \end{figure}
    
    The cycle $\boldsymbol{\delta}$ does not contain a $U$-power of the contact generator $\mathbf{c}$ as a summand.
    Observe that we can construct such a cycle $\boldsymbol{\delta} $ for any choice of cycle $\boldsymbol{\eta}$. However, then $\boldsymbol{\delta}$  also realizes $\nu_t(-Y,-K, \overline{\mathfrak{s}})$ since $\boldsymbol{\eta} $ does by assumption, but no summand of  $\boldsymbol{\delta} $ lies in Alexander grading $-g$ by  Proposition~\ref{prop:contactgenerator}. This contradicts that the slope of a generator realizing Upsilon is equal to negative the Alexander grading of that generator. 
    
    Thus, the slope $\Upsilon'_{-Y,- K, \overline{\mathfrak{s}}}(t) $  for $t<\frac{2}{n+1}$ cannot be equal to $g$ if $b(K) \leq n$. Finally, we conclude the slope $\Upsilon'_{Y, K, {\mathfrak{s}}}(t) $  for $t<\frac{2}{n+1}$ cannot be equal to $-g$ if $b(K) \leq n$. 
\end{proof}

For $n =1, $ we recover the right-veering obstruction of He-Hubbard-Truong. Moreover, if $K\subset S^3$ is a fibered knot which induces the unique tight contact structure on $S^3$, then $\tau(K)=g(K)$ by Hedden \cite{Hedden}. Since the right-hand slope of Upsilon at $t=0$ is $-\tau(K)$, Theorem~\ref{thm:main} implies that $b(K) = \infty$, which recovers Hedden's result that the associated contact invariant does not vanish for tight fibered knots in $S^3$.

\section{ Nontrivial terms in knot Floer homology of fibered knots}

Baldwin-Vela-Vick show that the knot Floer homology of a fibered
knot is nontrivial in its next-to-top Alexander grading. Their proof is Heegaard-diagrammatic and reduces to showing that in the case $b(K) = 1$, in which case the  term in Alexander grading $g(K) - 1$ is nonzero. We prove an analogous non-vanishing statement: the term in Alexander grading $g(K) - b(K)
$ is nonzero. 
\begin{proof}[Proof of Proposition~\ref{prop:nonzero}]
    If $b(K) = n$, then 
    \[n -g = \min \{ k \mid [\mathbf{c}] = 0 \text{ in } H_*(\mathcal{F}_k)\}.  \]
Thus, there is a chain $\mathbf{d}  \in \mathcal{F}_{n-g}$ with Alexander filtration level $n-g$  such that $\partial \mathbf{d} = \mathbf{c}$, where $\mathbf{c}$ is the contact generator. We will show that $\mathbf{d}$ induces a nonzero class in $H_*(\mathcal{F}_{n-g}/\mathcal{F}_{n-g-1})$. Since $\mathbf{c} \in  \mathcal{F}_{-g} \subset \mathcal{F}_{n-g-1}$, we have $\partial \mathbf{d} =0 $ in $\mathcal{F}_{n-g}/\mathcal{F}_{n-g-1}$. If $\mathbf{d}$ were a boundary in $\mathcal{F}_{n-g}/\mathcal{F}_{n-g-1}$, then there would exist a chain $\mathbf{e} \in \mathcal{F}_{n-g}$ such that $\partial \mathbf{e} = \mathbf{d} + \mathbf{f},$ where   $\mathbf{f} \in \mathcal{F}_{ n-g-1}$. Since $\partial^2=0$, we have $0=\partial \mathbf{d} + \partial \mathbf{f} = \mathbf{c} + \partial \mathbf{f}$. But this would imply that $[\mathbf{c}]=0$ in $H_*(\mathcal{F}_{n-g-1})$ which contradicts that $b(K) = n$. Thus, $\mathbf{d}$ induces a  nonzero class in $H_*(\mathcal{F}_{n-g}/\mathcal{F}_{n-g-1})= 
\HFKhat(-Y, -K, [-\Sigma], n-g).$  By the symmetry properties of knot Floer homology under orientation reversals and Alexander grading (cf. \cite[Remark 2.5]{BVV}), the statement follows. 
\end{proof}

 Binns-Dey \cite{BinnsDey} and Ni \cite[Theorem A.1]{Ni} extend ideas of Baldwin-Vela-Vick \cite{BVV} to prove knot Floer rank inequalities for fibered knots and links in the second-to-top Alexander grading $1-g$.   Cheng-Hedden \cite[Problem 4.6]{ChengHedden} ask how the third-to-top term of knot Floer homology is related to the monodromy of a fibered knot. We prove a nonvanishing statement for the third-to-top term by extending ideas of Ni.  
\begin{theorem}\label{thm:thirdtotop}
Let $Y$ be a closed, oriented 3-manifold. 
    If $K$ is a fibered knot in $Y$ with fiber surface $\Sigma$ whose monodromy is not right-veering and not left-veering, then 
    \[\text{rank}(\HFKhat(Y, K, [\Sigma],  2-g)) \neq 0.\]
    Moreover, there exist nonzero classes 
    \begin{align*}
    \mathbf{x} \in \HFKhat(Y, K, [\Sigma], 2-g)
    \\
    \mathbf{y}, \mathbf{a} \in \HFKhat(Y, K, [\Sigma], 1-g)
    \\
    \mathbf{c} \in \HFKhat(Y, K, [\Sigma], -g)
    \end{align*}
    such that 
    \begin{align*}
        {(d_{w})}_*(\mathbf{y}) = \mathbf{x}
        \\
        {(d_{z})}_*(\mathbf{y}) = \mathbf{c}
        \\
        {(d_{z})}_*(\mathbf{x}) = \mathbf{a}
        \\
        {(d_{w})}_*(\mathbf{c}) = \mathbf{a}
    \end{align*}
\end{theorem}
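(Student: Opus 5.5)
We work with the full knot Floer complex over $\mathbb F[U,V]$, or equivalently with $\HFKhat$ together with the two induced differentials $d_z$ and $d_w$. Here $d_z$ counts disks with $n_w=0$ and lowers the Alexander grading. $d_w$ counts disks with $n_z=0$ and raises it (counted with the appropriate powers). Following Ni's argument for \cite[Theorem A.1]{Ni}, we pass to a reduced model: by the method of Proposition~\ref{prop:contactgenerator} (Hom's change of basis), $\CFK$ is homotopy equivalent to a reduced complex whose underlying $\mathbb F$-module is $\HFKhat$. Its differential then decomposes into components $d_z^{(k)}$ and $d_w^{(k)}$ of length $k\ge1$, plus diagonal terms. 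In the adapted diagram of Section~\ref{section:contact} for $-K\subset -Y$, the contact generator $\mathbf c$ is the unique generator in the bottom grading $-g$. By symmetry, the bottom grading for the mirror plays the role of the top grading, and we translate at the end via \cite[Remark 2.5]{BVV} to get gradings $-g,1-g,2-g$.

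The key inputs are the two one-sided statements. Since the monodromy is not right-veering, $b(K)=1$ by \cite{BNS} (or directly \cite{BVV}). The proof of Proposition~\ref{prop:nonzero} with $n=1$ then gives a class $\mathbf y$ in grading $1-g$ with $(d_z)_*(\mathbf y)=\mathbf c$, and this arrow has length one. Since the monodromy is also not left-veering, the same argument applies to the mirror open book $(\Sigma,\phi^{-1})$, for which $b=1$ as well. Reading that statement in the original complex through the $z\leftrightarrow w$ symmetry, we get a length-one $d_w$ arrow from $\mathbf c$ to some nonzero class $\mathbf a$ in grading $1-g$, so $(d_w)_*(\mathbf c)=\mathbf a$.

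The main step is to produce $\mathbf x$ in grading $2-g$. Apply $\partial^2=0$ to $\mathbf y$ and look at the component landing in the $U V$-shifted copy of grading $1-g$; this is the component of $U V\cdot \HFKhat(1-g)$ obtained by composing one $z$-step with one $w$-step. It reads
\[
d_w d_z(\mathbf y)+d_z d_w(\mathbf y)+(\text{diagonal terms})=0 .
\]
Its first term is $d_w(\mathbf c)=\mathbf a\neq0$. We will argue that no diagonal component of the differential can map $\mathbf y$ into this spot, or that any such component can be absorbed by a further change of basis. The plan is Alexander and Maslov grading bookkeeping together with reducedness. Granting this, $d_z d_w(\mathbf y)=\mathbf a\neq0$. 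Hence $\mathbf x:=d_w(\mathbf y)$ is nonzero in grading $2-g$, and $d_z(\mathbf x)=\mathbf a$. This gives all four relations and the rank statement.

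I expect the real obstacle to be justifying that the cross terms vanish. The $\partial^2=0$ identity in the reduced complex also involves diagonal components, meaning terms with both $U$ and $V$ powers, mapping $\mathbf y$ to $UV\cdot(\text{grading }1-g)$. One must also check that the $\mathbf c$-component is not cancelled by other length-one $d_z$ images. To handle this, I would first use that $\mathbf c$ spans grading $-g$ alone, and choose the basis so that $\mathbf y$ is the unique generator with $d_z$-component hitting $\mathbf c$. I would then exploit the explicit Heegaard diagram, where disks from $\mathbf c$ are tightly controlled, as in Ni's and Baldwin--Vela-Vick's analysis, to rule out a diagonal term whose image contains $\mathbf a$.
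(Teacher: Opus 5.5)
Your proposal is the paper's argument. One veering hypothesis gives $(d_z)_*(\mathbf y)=\mathbf c$, the other gives $(d_w)_*(\mathbf c)=\mathbf a\neq 0$, and the component of $\partial^2\mathbf y=0$ in bifiltration level $(-1,-g)$ then forces $\mathbf x:=d_w(\mathbf y)$ to satisfy $d_z(\mathbf x)=\mathbf a$. The paper simply cites Ni's Theorem~A.2 for the two inputs. Your rederivation of them works, with two small corrections. First, ``not right-veering $\Rightarrow b(K)=1$'' is the contrapositive of Baldwin--Vela-Vick's theorem; Baldwin--Ni--Sivek proved the converse. Second, moving the $b=1$ statement for $(\Sigma,\phi^{-1})$ back into the complex of $-K\subset -Y$ needs the duality between the complexes of $K\subset Y$ and $-K\subset -Y$, not only the $i\leftrightarrow j$ symmetry.

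The step you leave as ``granting this'' and call the real obstacle is not an obstacle. Your proposed remedies (ruling out diagonal arrows, further changes of basis, inspecting the Heegaard diagram) target something irrelevant. In the reduced complex write $\partial=\sum_{(a,b)\neq(0,0)}\partial_{a,b}$, where $\partial_{a,b}$ lowers $(i,j)$ by $(a,b)$. Reducedness means exactly that there is no $\partial_{0,0}$ term. The part of $\partial^2$ lowering $(i,j)$ by $(1,1)$ is a sum over splittings $(1,1)=(a,b)+(a',b')$ with both pieces nonzero, so
\[
(\partial^2)_{1,1}=\partial_{0,1}\partial_{1,0}+\partial_{1,0}\partial_{0,1}=d_z d_w+d_w d_z .
\]
A diagonal arrow $\partial_{1,1}$ from $\mathbf y$ to level $(-1,-g)$ is a term of $\partial\mathbf y$. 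It could enter $(\partial^2)_{1,1}$ only composed with $\partial_{0,0}=0$, so it may exist and does no harm.

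Your cancellation worry is also vacuous. In the reduced model the generators at $i=0$ form a basis of $\HFKhat$, so $d_z(\mathbf y)$ is the single element $\mathbf c$ spanning the one-dimensional $\HFKhat(-g)$. Hence $d_wd_z(\mathbf y)=d_w(\mathbf c)=\mathbf a$ regardless of what other generators do. With this one-line observation, which the paper uses implicitly after invoking Rasmussen's reduced model, your argument is complete and coincides with the paper's.
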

\begin{proof} Observe that $g>0$. After an isomorphism of the knot Floer complex, we can assume that the differential on the knot Floer complex strictly decreases at least one filtration level \cite[Lemma 4.5]{Rasmussen-thesis}.
Suppose the monodromy $\phi$ associated to the open book decomposition for $K$ is not left-veering. Then by work of Ni \cite[Theorem A.2]{Ni} (cf. \cite[Proof of Theorem 1.1]{BVV}) the map \[{(d_{z})}_*: \HFKhat(Y, K, [\Sigma],  1-g) \to \HFKhat(Y, K, [\Sigma],  -g)\] is nonzero. Let $\mathbf{y} \in \HFKhat(Y, K, [\Sigma],  1-g)$ be a nonzero class such that ${(d_{z})}_*(\mathbf{y}) = \mathbf{c}$. Since $\phi$ is not right-veering, we have that the map 
\[{(d_{w})}_*: \HFKhat(Y, K, [\Sigma],  -g) \to \HFKhat(Y, K, [\Sigma],  1-g)\]
is also nonzero \cite{Ni}. Since $\mathbf{c}$ is the generator of $\HFKhat(Y, K, [\Sigma],  -g)$, this means that ${(d_{w})}_*(\mathbf{c}) \neq 0.$ Let $\mathbf{a}$ be the nonzero image of $\mathbf{c}$ under the map  ${(d_{w})}_*$. Note that $\mathbf{a}$ sits in the $(-1, -g)$ bifiltration level. Since $d^2 = 0$, and the term in the $(-1, -g)$ bifiltration level of $d^2 \mathbf{y} $ corresponds to the sum ${(d_{z})}_* \circ {(d_{w})}_* \mathbf{y} + {(d_{w})}_* \circ {(d_{z})}_* \mathbf{y}$, and must vanish. Therefore, since $  {(d_{w})}_* \circ {(d_{z})}_* (\mathbf{y}) = \mathbf{a}, $ we must have that $  {(d_{z})}_* \circ {(d_{w})}_* (\mathbf{y}) = \mathbf{a}. $ In other words, there must exist a nonzero class $\mathbf{x} \in \HFKhat(Y, K, [\Sigma], 2-g)$ such that $  {(d_{w})}_* (\mathbf{y}) = \mathbf{x} $ and $  {(d_{z})}_* (\mathbf{x}) = \mathbf{a}. $ 
\end{proof}



\section{Examples}
In this section, we will give families of cable knots whose $b$-invariants are finite and grow arbitrarily large. For the  family of cables in Corollary~\ref{example:upsiloncables}, we partially compute the Upsilon functions, which attain the extremal Upsilon slope (of negative genus).  Our main theorem then implies the $b$-invariant for this family of knots grows arbitrarily large. Our computation relies on the Chen cabling inequality for Upsilon \cite{Chen}, the Hom formula for the tau invariant of cable knots \cite{Hom-cables}, and properties of the Upsilon invariant. 
\begin{corollary} \label{example:upsiloncables}
Let $p \geq 2$ and $n \geq 0$ be  integers.
For the cable knots $(T_{2, -3})_{p, pn+1}$,
the slope of $\Upsilon_{(T_{2, -3})_{p, pn+1}}(t)$ equals $-g((T_{2, -3})_{p, pn+1})$ for some $t \in (0, \frac{2}{p})$. 
 Therefore 
\[ p-1 <  b
((T_{2, -3})_{p, pn+1}) < \infty . \]
\end{corollary}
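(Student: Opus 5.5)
The plan is to compute enough of $\Upsilon$ for $K_{p,pn+1}$, with $K=T_{2,-3}$, to find an interval inside $(0,\frac{2}{p})$ on which the slope is forced to equal $-g$. Theorem~\ref{thm:main} with $n$ replaced by $p-1$ then gives $b>p-1$, because $\frac{2}{(p-1)+1}=\frac{2}{p}$. Finiteness comes from the $\tau$ invariant.

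\emph{Step 1: genus and $\tau$.} The cable is fibered, and its genus is
\[
g:=g(K_{p,pn+1}) = p\,g(K)+\tfrac{(p-1)pn}{2} = p+\tfrac{(p-1)pn}{2}.
\]
Since $\epsilon(T_{2,-3})=-1$, Hom's cabling formula \cite{Hom-cables} gives
\[
\tau(K_{p,pn+1}) = p\,\tau(K)+\tfrac{(p-1)(pn+2)}{2} = -p+\tfrac{(p-1)(pn+2)}{2},
\]
and this is strictly less than $g$. By \cite{Hedden} the contact invariant of the associated contact structure vanishes, so $b<\infty$, exactly as in the proof of Proposition~\ref{prop:cableexamples}.

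\emph{Step 2: a model function.} We have $\Upsilon_{T_{2,-3}}(s)=s$ for $s\in[0,1]$ and $\Upsilon_{T_{2,-3}}(s)=2-s$ for $s\in[1,2]$. For $t\in[0,\frac{2}{p}]$, $\Upsilon_{T_{p,pn+1}}(t)=-\frac{(p-1)pn}{2}\,t$. Set
\[
G(t)=\Upsilon_{T_{2,-3}}(pt)+\Upsilon_{T_{p,pn+1}}(t).
\]
On $(0,\frac1p)$ the slope of $G$ is $p-\frac{(p-1)pn}{2}$. On $(\frac1p,\frac2p)$ it is $-p-\frac{(p-1)pn}{2}=-g$, which is the extremal value.

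\emph{Step 3: transfer to the cable.} Chen's cabling inequality \cite{Chen} bounds $\Upsilon_{K_{p,q}}(t)$ above and below in terms of $G(t)$ for $t\le\frac2p$. I would use it to pin down the values at the two endpoints, aiming for
\[
\Upsilon_{K_{p,pn+1}}\!\left(\tfrac2p\right)-\Upsilon_{K_{p,pn+1}}\!\left(\tfrac1p\right)\le -\tfrac{g}{p}.
\]
All slopes of $\Upsilon$ are at least $-g$ \cite{OSS}, and the interval has length $\frac1p$. So this inequality forces every non-singular slope on $(\frac1p,\frac2p)$ to equal $-g$. Theorem~\ref{thm:main} then gives $b>p-1$.

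\emph{Main obstacle.} Step 3 is where I expect the difficulty. Chen's inequality is two-sided with a correction term, so it may not give equality with $G$ at both $t=\frac1p$ and $t=\frac2p$. Two facts help. First, $\Upsilon_{K_{p,q}}(0)=0$ and its right-hand slope at $0$ is $-\tau$, which is known from Step 1; this controls the behavior at the left end. Second, the symmetry $\Upsilon(2-t)=\Upsilon(t)$ and piecewise linearity, together with Chen's lower bound at $t=\frac2p$ and his upper bound near $t=\frac1p$, should give the required drop of $g/p$. If the endpoint bounds still fall short by the correction term, I would instead compare a shorter interval near $\frac2p$. There the bounds may coincide, since $\Upsilon_K(pt)$ is linear near $pt=2$. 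I would then apply the same slope argument on that shorter interval.
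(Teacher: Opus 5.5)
Your Steps 1 and 2 are correct and match the paper: $\tau=\tfrac{np(p-1)}{2}-1$, $g=\tfrac{np(p-1)}{2}+p$, finiteness via Hedden, and the model $G$ with slope $-g$ on $(\tfrac1p,\tfrac2p)$. The gap is in Step 3, and it is not a technicality. The inequality you aim for, $\Upsilon(\tfrac2p)-\Upsilon(\tfrac1p)\le-\tfrac gp$, is false, because the slope $-g$ is only forced on the shorter interval $(\tfrac{2}{p+1},\tfrac2p)$. Take $p=2$, $n=0$, the mirror of $(T_{2,3})_{2,-1}$, with $g=2$ and $\tau=-1$. There $\Upsilon(t)=t$ on $[0,\tfrac23]$ and $\Upsilon(t)=2-2t$ on $[\tfrac23,1]$, so $\Upsilon(1)-\Upsilon(\tfrac12)=-\tfrac12>-1$.

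Your fallbacks do not rescue this. Write $h(t)=\Upsilon(t)+\tau t$. Chen's inequality then reads $0\le h\le(p-1)t$ on $[0,\tfrac1p]$ and $2-2pt\le h\le 2-(p+1)t$ on $[\tfrac1p,\tfrac2p]$.
\begin{itemize}
\item The two bounds differ by exactly $(p-1)t$, so they coincide nowhere in $(0,\tfrac2p]$, however linear $\Upsilon_{T_{2,-3}}(pt)$ is.
\item Pairing Chen's lower bound at $\tfrac2p$ with his upper bound near $\tfrac1p$ bounds the drop from below, not above.
\item The symmetry $t\mapsto 2-t$ says nothing on $[0,\tfrac2p]$.
\end{itemize}
In fact, take $h=0$ on $[0,\tfrac1p]$ and $h=\tfrac2p-2t$ on $[\tfrac1p,\tfrac2p]$. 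This satisfies everything you invoke: Chen's bounds, $\Upsilon(0)=0$, initial slope $-\tau$, and slopes in $[-g,g]$. Yet it never has $\Upsilon'=-g$, which would mean $h'=-(p+1)$.

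The missing ingredient is the integrality of slope changes, $t\,\Delta\Upsilon'(t)\in2\mathbb{Z}$ \cite[Prop.~1.7]{OSS}; the function above violates it at $t=\tfrac1p$. Integrality implies that every linear piece $et+b$ of $h$ has $b\in2\mathbb{Z}$. Indeed, $b=0$ on the initial piece, and a break point at $t$ changes $b$ by $-t\,\Delta h'(t)$.

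Now let $u=\inf\{t>0:h(t)<0\}$.
\begin{itemize}
\item Since $h\equiv0$ near $0$, and Chen's upper bound gives $h<0$ on $(\tfrac2{p+1},\tfrac2p)$, we have $0<u\le\tfrac2{p+1}$.
\item The piece just to the right of $u$ has slope $e<0$ and intercept $b=|e|u>0$. Since $b$ is even, $b\ge2$.
\item Since $e\ge-(p+1)$ (this is $\Upsilon'\ge-g$), we get $u\ge\tfrac{2}{p+1}$. Hence $u=\tfrac2{p+1}$ and $h(\tfrac2{p+1})=0$.
\end{itemize}
Only now does your squeezing argument work, on the correct interval. We have $h(\tfrac2p)\le-\tfrac2p$ and $h'\ge-(p+1)$ over an interval of length $\tfrac{2}{p(p+1)}$. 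Together these force $h'=-(p+1)$, i.e.\ $\Upsilon'=-g$, on $(\tfrac2{p+1},\tfrac2p)$. Theorem~\ref{thm:main} with $n=p-1$ then gives $b>p-1$.

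The paper reaches the same conclusion with the same even-intercept bookkeeping. It organizes the argument as two cases, according to the sign of the first slope change $d$ of $h$.
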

\begin{proof}
We can compute the $\tau$ invariant using Hom's formula \cite{Hom-cables} and obtain
\[\tau((T_{2, -3})_{p, pn+1}) = \frac{np(p-1)}{2} -1. \]
We can also compute the genus $ g((T_{2, -3})_{p, pn+1}) $ using Schubert's genus formula \cite{Schubert} which gives 
\[g((T_{2, -3})_{p, pn+1}) = \frac{np(p-1)}{2} + p.\]
Since the $\tau $ invariant is not equal to the genus, we conclude that the contact invariant of the associated contact structure is zero \cite{Hedden}, and thus $b((T_{2, -3})_{p, pn+1})$ is finite. 

The Upsilon invariant has right-hand slope $-\tau = -(\frac{np(p-1)}{2} -1)$ at $t=0$. The slope of Upsilon is equal to an Alexander grading of a generator of Heegaard Floer homology; for a knot in $S^3$, this must be an integer. 

The Upsilon function is a  real-valued, piecewise linear function of $t$ defined on $t \in [0,2]$, and we know so far that 
  \begin{align*}
      \Upsilon_{(T_{2, -3})_{p, pn+1}}(t) = \begin{cases}
      -(\frac{np(p-1)}{2} -1) t  & \text{for } 0 \leq t \leq t_0,
      \\
      d (t - t_0) -(\frac{np(p-1)}{2} -1) t  & \text{for } t_0 \leq t \leq t_1.
  \end{cases}
  \end{align*}
for some $t_0 > 0$, $t_1 > t_0$ and  some integer $d$. Since each slope $s$ of $\Upsilon_K(t)$ is equal to some Alexander grading s for
which $\widehat{HFK}(K, s) \neq 0$, a slope $s$ must be an integer between $-g(K)$ and $g(K)$.

For simplicity, we will define a function $h: [0,2] \to \mathbb{R}$ which shifts the Upsilon function so that the initial right-hand slope of $h$ is zero: 
\[h(t) = \Upsilon_{(T_{2, -3})_{p, pn+1}}(t) + (\frac{np(p-1)}{2} -1) t.\]
Then the above is equivalent to
  \begin{align*}
      h(t) = \begin{cases}
      0  & \text{for } 0 \leq t \leq t_0,
      \\
      d (t - t_0)   & \text{for } t_0 \leq t \leq t_1.
  \end{cases}
  \end{align*}
  Each slope of $h(t)$ is the sum of the corresponding slope $\Upsilon'_{(T_{2, -3})_{p, pn+1}}(t)$ and $\frac{np(p-1)}{2} -1$. Thus, the possible values of $d$ are integers between $-p-1$ and $(np+1)(p-1)$. 
By Proposition 1.7 of \cite{OSS}, the value $t \Delta \Upsilon'(t) = t \Delta h'(t) $ is an even integer  for any $t \in [0,2]$. Thus, at the point $t_0$, we have that $t_0 d  \in 2\mathbb{Z}$. 

Note that by an inequality of Chen for Upsilon of cable knots \cite[Theorem 1.3]{Chen}, for $\frac{1}{p} \leq t \leq \frac{2}{p}$, we have
\[ 2-2pt \leq h(t) \leq 2-(p+1)t. \]
In particular, $h(\frac{2}{p+1}) \leq 0$  and for $ \frac{2}{p+1} < t < \frac{2}{p}$ we have that $h(t) < 0$. Thus $0<t_0 \leq \frac{2}{p+1}$.

We have two cases: $d>0$ or $d<0$. If $d<0$, then the possible values of $d$ are \[-p-1, -p, \dots, -1.\] For $d<0$, we have $t_0d  \leq -2$ since $t_0 d \in 2\mathbb{Z}$; we conclude $t_0 \geq -\frac{2}{d}$. Thus, \[t_0 \geq \min\{\frac{2}{p+1}, \dots, \frac{2}{1}\} = \frac{2}{p+1},\] which shows that $t_0 = \frac{2}{p+1}$ and $d=-(p+1)$ as desired. 

In the second case, $d>0$. However, since $h(\frac{2}{p+1}) \leq 0$, the function $h(t)$ must return to zero from above at some first time $u \leq \frac{2}{p+1}$. Immediately before $u$, the function $h$ is a linear function of negative slope $d_k \in \mathbb{Z}_{<0}$. Thus we have that  $h$ is piecewise linear function with slopes $d, d_1, \dots, d_k$ and $y$-intercepts $b_0, b_1, \dots, b_k$ for $0 \leq t \leq u$. Note that $b_0=-dt_0$. 
  \begin{align*}
      h(t) = \begin{cases}
      0  & \text{for } 0 \leq t \leq t_0,
      \\
      dt + b_0   & \text{for } t_0 \leq t \leq t_1,
      \\
      d_1 t + b_1   & \text{for } t_1 \leq t \leq t_2,
      \\
      \vdots
      \\
      d_kt + b_{k}  & \text{for } t_k \leq t \leq u.
  \end{cases}
  \end{align*}
 We claim that each $y$-intercept $b_i$ is an even integer. We already have observed that $dt_0 \in 2\mathbb{Z}$. By induction, assume that $b_{i-1} \in 2\mathbb{Z}$. Then at a point $t_i$ of changing slope,
 \[d_it_i+b_i = d_{i-1} t_i + b_{i-1}.\]
 Since $t_i (d_i - d_{i-1}) \in 2\mathbb{Z}$, we conclude that $b_i \in 2\mathbb{Z}$ as well. We have $u>0$, $d_k < 0$, $b_k \in 2\mathbb{Z}_{>0} $, and
\[0=h(u) = d_k u + b_k,\]
which gives that $u = -\frac{b_k}{d_k} > 0$. Again, the possibilities for negative $d_k$ are 
 \[-p-1, -p, \dots, -1\] 
and we have
 \[u \geq \min \{ \frac{2}{p+1}, \dots, \frac{2}{1} \} = \frac{2}{p+1}. \] 
Thus, $ u = \frac{2}{p+1}$ and $d_k = -p-1$. 

Note we've shown that $\Upsilon_{(T_{2, -3})_{p, pn+1}}(t)$ attains slope $-g((T_{2, -3})_{p, pn+1})$ for some $t \in (0, \frac{2}{p})$. 
 It follows from Theorem~\ref{thm:main} that  $b((T_{2, -3})_{p, pn+1}) > p-1.$
\end{proof}


\bibliographystyle{amsalpha}
\bibliography{upsilonandb}

\end{document}